\newtheorem{definition}{Definition}
\newtheorem{theorem}{Theorem}
\newtheorem{lemma}{Lemma}
\newtheorem{proposition}{Proposition}
\newtheorem{corollary}{Corollary}
\def\de{\delta}
\def\al{\alpha}
\def\Ga{\Gamma}
\def\La{\Lambda}
\def\la{\lambda}
\def\Om{\Omega}
\def\om{\omega}
\def\kappa{\varkappa}
\def\C{{\mathbb C}}
\def\Z{{\mathbb Z}}
\def\sN{{\mathfrak S}_N}
\def\sl{\mathfrak{sl}_2}
\def\sinf{{\mathfrak S}_{\mathbb N}}
\def\maj{\operatorname{maj}}
\def\des{\operatorname{des}}
\def\Vir{\operatorname{Vir}}
\def\te{{\widetilde e}}
\def\T{{\cal T}}
\def\H{{\cal H}}
\def\X{{\cal X}}
\def\F{{\cal F}}
\def\K{{\cal K}}
\def\beq{\begin{equation}}
\def\eeq{\end{equation}}
\vershik\url{vershik@pdmi.ras.ru}
\natalia\url{natalia@pdmi.ras.ru}
\author{N.~V.~Tsilevich\thanks{%
St.~Petersburg Department of Steklov Institute of Mathematics and St.~Petersburg State University.
E-mail: \natalia, \vershik. Supported by the grants
RFBR 13-01-12422-ofi-m and RFBR 14-01-00373-a.}
\and A.~M.~Vershik\footnotemark[1]}
\title{The serpentine representation of the infinite symmetric group and the basic representation of the affine Lie algebra $\widehat{\mathfrak{sl}_2}$}
\date{}
\begin{document}
\maketitle

\begin{abstract}
We introduce and study the so-called serpentine representations of the infinite symmetric group $\sinf$, which turn out to  be closely related to the basic representation of the affine Lie algebra
$\widehat{\mathfrak{sl}_2}$ and representations of the Virasoro algebra.

\smallskip
\noindent {\bf Mathematics Subject Classification:} 20C32, 81R10.

\smallskip
\noindent {\bf Keywords:} infinite symmetric group, Schur-Weyl representation, serpentine representation, affine Lie algebra, Virasoro algebra.
\end{abstract}

\section{Introduction. Infinite-dimensional Schur--Weyl duality and the serpentine representation of $\sinf$}

The serpentine representation is a remarkable representation of the infinite symmetric group $\sinf$, which has not yet been studied. Its importance is due to the fact that it is very closely related to the basic representation of the affine Lie algebra
$\widehat{\mathfrak{sl}_2}$ and representations of the Virasoro algebra.
This representation belongs to the class of so-called Schur--Weyl representations.
Recall that in \cite{SW} we suggested an infinite-dimensional generalization of the classical Schur--Weyl duality for the symmetric group $\sN$ and the special linear group $SL(2,\mathbb C)$ using a ``dynamical'' approach. Namely, we started from the classical Schur--Weyl duality (for definiteness, assume that $N=2n$)
\beq\label{swfin}
(\C^2)^{\otimes N}
=\sum_{k=0}^{n}M_{2k+1}\otimes H_{\pi_{k}},
\eeq
where $H_{\pi_{k}}$ is the space of the irreducible representation  $\pi_{k}$ of the
symmetric group $\sN$
corresponding to the two-row Young diagram
$(n+k,n-k)$
and $M_{2k+1}$ is the $(2k+1)$-dimensional irreducible
$SL(2,\C)$-module, and considered  {\it
isometric embeddings $(\C^2)^{\otimes
N}\hookrightarrow(\C^2)^{\otimes (N+2)}$
that are equivariant with respect to both the actions of $SL(2,\C)$ and $\sN$}, which we called {\it Schur--Weyl
embeddings.}
Given an infinite chain
\beq\label{even}
(\C^2)^{\otimes0}\stackrel{\al_{0}}{\hookrightarrow}(\C^2)^{\otimes
2}\stackrel{\al_{2}}{\hookrightarrow}(\C^2)^{\otimes
4}\stackrel{\al_{4}}{\hookrightarrow}{\ldots}
\eeq
of Schur--Weyl embeddings, we can consider the corresponding inductive limit.
The class of all representations (called {\it Schur--Weyl representations}) that can be obtained in this way is described in \cite[Theorem~1]{SW}:
 {\it
Let $\Pi^{\{\al_{N}\}}$ be the representation of the infinite symmetric
group $\sinf$ obtained as the inductive limit of the standard
representations of $\sN$ in $(\C^2)^{\otimes N}$ with respect to an infinite
chain of Schur--Weyl embeddings~\eqref{even}. Then it
decomposes into a countable direct
sum of primary representations
\beq\label{X}
\Pi^{\{\al_{N}\}}=\sum_{k=0}^\infty M_{2k+1}\otimes\Pi_{k}^{\{\al_{N}\}} ,
\eeq
where $\Pi_{k}^{\{\al_{N}\}}$ is  the inductive limit of the irreducible
representations of ${\mathfrak S}_{2k},\break
{\mathfrak S}_{2k+2},{\ldots} $ corresponding to
the Young diagrams
$(2k),(2k+1,1),(2k+2,2),\ldots$.}

As an important example of such a representation, in \cite{SW} we considered the unique infinite Schur--Weyl scheme that satisfies a natural additional condition, namely, preserves the tensor structure of $(\C^2)^{\otimes N}$.
{\it The main goal of this paper is to study another example of Schur--Weyl duality,
namely, the {\bf unique} Schur--Weyl scheme that satisfies the following additional
condition: it preserves the so-called stable major index of a Young tableau.} We show that this particular representation of the infinite symmetric group, which we call the serpentine representation, can be naturally equipped with the structure of the basic representation of the affine Lie algebra $\widehat{\mathfrak{sl}_2}$, with the irreducible $\sN$-modules corresponding to the irreducible Virasoro modules. This reveals new interrelations between the representation theory of the infinite symmetric group and that of the
affine Lie and Virasoro algebras. The precise form of the underlying natural grading-preserving isomorphism of $\sl$-modules is still unknown in the general case, and perhaps it is not a simple task to find it, but we present several properties of this isomorphism which are corollaries of the main theorem.

Our approach uses the result of  \cite{FF} that the level~1 irreducible
highest weight representations of $\widehat\sl$ can be realized as certain inductive limits of tensor powers
$(\C^{2})^{\otimes N}$ of the two-dimensional irreducible representation of $\sl$. The construction of \cite{FF}  is based on the notion of the fusion product of representations, whose
main ingredient is, in turn, a special grading in the space $(\C^{2})^{\otimes N}$. A key observation underlying the results of this paper, which relies on the computation presented in \cite{Kedem} of the $q$-characters of the multiplicity spaces of irreducible $\sl$-modules with respect to this grading, is that the fusion product under consideration can be realized in
an $\sN$-module so that this special grading essentially coincides with a well-known
combinatorial characteristic of Young tableaux called the major index (see Proposition~\ref{prop:fin}). Thus
our results provide, in particular, a kind of combinatorial description of the fusion product and show that
the combinatorial notion of the major index of a Young tableau has a new  representation-theoretic meaning. For instance, Corollary~\ref{cor:vir} in Sec.~\ref{sec:iso}  shows that {\it the so-called stable major indices of infinite Young tableaux are the eigenvalues of the Virasoro $L_{0}$ operator, the Gelfand--Tsetlin basis of the Schur--Weyl module being its eigenbasis.}

The paper is organized as follows. In Sec.~\ref{sec:main} we introduce our main object, the so-called serpentine representation of the infinite symmetric group, as well as the notion of the stable major index of an infinite Young tableau, and formulate our main Theorem~\ref{th2},
which states that there is a grading-preserving isomorphism of $\sl$-modules between the basic $\widehat\sl$-module
$L_{0,1}$ and the space $H_{\Pi}$ of the serpentine representation. The theorem is proved in Sec.~\ref{sec:proof}. In Sec.~\ref{sec:iso} we study the above isomorphism in more detail, describing some of its properties and giving examples.

\smallskip
{\it For definiteness, in what follows we consider only the even case
$N=2n$. The odd case can be treated in exactly the same way; instead
of the basic representation $L_{0,1}$, it leads to the other level~$1$ highest weight representation
$L_{1,1}$ of $\widehat\sl$.}
\bigskip

\noindent {\bf Acknowledgments.} The authors are grateful to I.~Frenkel, B.~Feigin, and E.~Feigin for useful discussions.
We also thank the anonymous referees for valuable comments and suggestions which helped us to substantially improve the paper.

\section{The main theorem}\label{sec:main}

\subsection{The serpentine representation, stable major index, and the statement of the theorem}
Let $T_N$ be the set of all standard Young tableaux with $N$ cells
and at most two rows.
Consider the following natural embedding $i_N:T_N\to
T_{N+2}$: given a standard Young tableau $\tau$ with $N$ cells, its
image $i_N(\tau)$ is the standard Young tableau with $N+2$ cells
obtained from $\tau$  by adding the element $N+1$ to the first row and
the element $N+2$ to the second row.
As shown in \cite{SW}, it determines a Schur--Weyl
embedding  $(\C^2)^{\otimes
N}\hookrightarrow(\C^2)^{\otimes (N+2)}$, which, by abuse of notation, we denote by the same symbol $i_{N}$.

\begin{definition}
The Schur--Weyl representation $\Pi:=\Pi^{(i_{N})}$
of the infinite symmetric group $\sinf$ in the space $H_{\Pi}=\lim((\C^2)^{\otimes
N}, i_{N})$ will be called the serpentine representation.
\end{definition}

According to the theorem on Schur--Weyl representations (see the introduction), we have
\beq\label{serp}\Pi=\sum_{k=0}^\infty M_{2k+1}\otimes\Pi_{k},
\eeq
where the irreducible component $\Pi_k$, which will be called the {\it $k$-serpentine} representation, is
the representation of  $\sinf$
associated with the infinite tableau
\begin{center}
$\tau_{k}=\lower1\cellsize\vbox{\footnotesize
\cput(1,1){1}
\cput(1,2){2}
\cput(1,3){$\ldots$}
\cput(1,4){$2k$}
\cput(1,5){$2k+1$}
\cput(1,6){$2k+3$}
\cput(1,7){$\ldots$}
\cput(2,1){$2k+2$}
\cput(2,2){$2k+4$}
\cput(2,3){$\ldots$}
 \cells{
 _ _ _ _ _ _ _
|_|_|_|_|_|_|_|
|_|_|_|}}$
\end{center}

\noindent (in particular, $\tau_{0}$ is the tableau with $1,3,5,\ldots$ in the first row and $2,4,6,\ldots$ in the second row),
which can be realized in the space $H_{\Pi_k}$ spanned by the set $\T_{k}$ of infinite two-row Young tableaux
tail-equivalent to $\tau_k$. It has a discrete spectrum with respect to the Gelfand--Tsetlin algebra.

In what follows,
the tableaux $\tau_k$, $k=0,1,\ldots$, will be called {\it principal}, and a tableau tail-equivalent to $\tau_{k}$ for some $k$ will be called a {\it serpentine tableau}; denote by $\T=\cup\T_{k}$ the set of all serpentine tableaux.

Now consider the well-known statistic on
Young tableaux called the {\it major index}. It is defined as
follows (see \cite[Sec.~7.19]{St}):
$$
\maj(\tau)=\sum_{i\in\des(\tau)} i,
$$
where, for $\tau\in T_{N}$,
$$
\des(\tau)=\{i\le N-1: \mbox{ the element $i+1$  in $\tau$ lies lower than $i$}\}
$$
is the descent set of $\tau$.

Obviously,
\beq\label{embmaj}
\maj(i_N(\tau))=\maj(\tau)+(N+1).
\eeq
This suggests the following important step. Given $N=2n$ and $\tau\in T_N$,
denote $r_N(\tau)=n^2-\maj(\tau)$. Then
$r_{N+2}(i_N(\tau))=r_N(\tau)$, so that we have a well-defined index on all serpentine tableaux $\tau\in\T$:
\beq\label{r}
r(\tau)=\lim_{n\to\infty}r_{2n}([\tau]_{2n})=\lim_{n\to\infty}(n^2-\maj([\tau]_{2n})),
\eeq
where $[\tau]_l$ is the tableau with $l$ cells obtained from $\tau$ by removing all the cells with entries $k>l$. Obviously, for the principal tableaux we have $r(\tau_k)=k^2$.

\begin{definition}We call $r(\tau)$ the {\it  stable major index} of an infinite tableau $\tau\in\T$.
\end{definition}

The stable major index determines a grading on all the spaces $H_{\Pi_k}$ and hence on the whole space $H_{\Pi}$: for $w=u\otimes v\in M_{2k+1}\otimes H_{\Pi_{k}}$ we just set $\deg_{r}(w)=r(v)$.

Now consider the affine
Lie algebra $\widehat\sl=\sl\otimes\C[t,t^{-1}]\oplus\C c\oplus\C d$, its basic module $L_{0,1}$ with the homogeneous grading $\deg_{H}$, and the natural embedding $\sl\subset\widehat\sl$ given by $\sl\supset x\mapsto x\otimes1\in\widehat\sl$.
Our main theorem is the following.

\begin{theorem}\label{th2}
There is
a grading-preserving unitary isomorphism of $\sl$-modules between $(L_{0,1},\deg_{H})$
and $(H_{\Pi},\deg_{r})$. The serpentine representation is the unique Schur--Weyl representation satisfying this condition.
\end{theorem}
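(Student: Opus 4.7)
The plan is to prove the isomorphism by matching the two descriptions of $L_{0,1}$ and $H_{\Pi}$ at each finite level and then assembling the identification in the inductive limit. The starting point is the Feigin--Feigin realization~\cite{FF} of $L_{0,1}$ as an inductive limit of fusion products on the spaces $(\C^2)^{\otimes 2n}$, which carry a specific ``fusion'' grading. By construction, the space $H_{\Pi}$ is also an inductive limit of the same tensor spaces, but taken with respect to the Schur--Weyl embeddings $i_{N}$ induced by the tableau embedding of Section~\ref{sec:main}, and equipped with the stable major index grading $\deg_{r}$. Thus the task reduces to identifying the graded spaces $(\C^2)^{\otimes 2n}$ on both sides, compatibly with the tower embeddings.

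The first step is the finite-level identification of the two gradings on $(\C^2)^{\otimes 2n}$. This is the content of Proposition~\ref{prop:fin}, which asserts that the fusion grading of~\cite{FF} coincides, up to the shift by $n^2$, with the major index on the irreducible $\sN$-components realized in the Young-tableaux model. I would establish this by comparing the corresponding $q$-characters of the multiplicity spaces of $M_{2k+1}$: on one side, Kedem's computation~\cite{Kedem} gives explicit formulas for the Hilbert series in the fusion grading, while on the other side, classical results give the generating function $\sum_{\tau} q^{\maj(\tau)}$ over standard Young tableaux of shape $(n+k,n-k)$. Both expressions agree after the $n^2$-normalization, and together with $\sl\times\sN$-equivariance this yields a graded isomorphism at level $2n$.

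Next I would check that the embedding $i_{N}$ is compatible with the natural tower of fusion products used in~\cite{FF}, so that the level-by-level isomorphisms glue into a single grading-preserving unitary isomorphism of $\sl$-modules in the inductive limit. The compatibility on the serpentine side rests on identity~\eqref{embmaj}, which guarantees $r_{N+2}(i_{N}(\tau))=r_{N}(\tau)$ and makes $\deg_{r}$ well-defined on the limit. Unitarity is automatic from the standard Hermitian structure on the tensor powers. For the uniqueness assertion, I would argue that any Schur--Weyl scheme giving a module isomorphic to $(L_{0,1},\deg_{H})$ must, at each finite level, agree with the identification above, which forces the underlying tableau embedding to preserve the major index up to the shift $N+1$. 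Among the Schur--Weyl embeddings classified in~\cite{SW}, this condition singles out $i_{N}$ and hence the serpentine representation.

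The principal obstacle, as I see it, lies in the first step: identifying the fusion grading with the major index rather than merely matching the graded dimensions. The $q$-character comparison ensures agreement of Hilbert series, but one additionally has to show that the identification can be chosen $\sl\times\sN$-equivariantly, which relies on the rigidity of the fusion construction and of the major-index filtration on two-row standard Young tableaux. Once Proposition~\ref{prop:fin} is in place, the remaining gluing into the inductive limit and the derivation of uniqueness are relatively routine.
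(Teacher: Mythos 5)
Your proposal follows essentially the same route as the paper: the finite-level graded isomorphism of Proposition~\ref{prop:fin} obtained by comparing Kedem's $q$-character with the major-index generating function (the paper passes through the Lascoux--Sch\"utzenberger charge formula for $K_{\la,1^N}$), followed by gluing along the two towers using the matching degree shifts~\eqref{embmaj} and~\eqref{embdeg}. The ``principal obstacle'' you single out is in fact not one: the theorem asks only for a grading-preserving isomorphism of $\sl$-modules, so equality of the graded dimensions of the multiplicity spaces already suffices, and the paper explicitly remarks that the $\sN$-action transported to $V_N^*$ by this isomorphism does not coincide with the original action on $(\C^2)^{\otimes N}$.
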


\smallskip\noindent{\bf Remarks. 1.}
As mentioned in the introduction, we consider only the even case
just for simplicity of notation. Considering instead of \eqref{even} the chain
$(\C^2)^{\otimes1}\hookrightarrow(\C^2)^{\otimes
3}\hookrightarrow(\C^2)^{\otimes
5}\hookrightarrow{\ldots}$ and reproducing exactly
the same arguments, we will obtain a grading-preserving isomorphism of the corresponding Schur--Weyl module with the other level~$1$ highest weight module
$L_{1,1}$ of $\widehat\sl$.

\medskip\noindent{\bf 2.} The conditions from the statement of Theorem~\ref{th2} do not uniquely determine the isomorphism, since there is a nontrivial group of transformations in $H_{\Pi}$ that commute with $\sl$ and preserve the grading. For more details, see the remark after Corollary~\ref{cor:vir} in Sec.~\ref{sec:iso}. To find an explicit form of this isomorphism is an intriguing problem.

\section{Proof of the main theorem}\label{sec:proof}

 \smallskip\noindent{\bf 1. Fusion product.} Our proof relies on the result of B.~Feigin and E.~Feigin \cite{FF} on a finite-dimensional approximation of
the basic representation of $\widehat\sl$, which, in turn, uses the notion of
 the fusion product of representations
introduced in \cite{FL}. Since the corresponding construction is of importance for us, we describe it in some detail.

Given a representation $\rho$ of $\sl$ and
$z\in\C$, let $\rho(z)$ be the evaluation representation of the polynomial
current algebra $\sl\otimes\C[t]$, defined as\break $(x\otimes
t^i)v=z^i\cdot xv$. Now, given a collection
$\rho_1,{\ldots} ,\rho_N$ of irreducible representations of $\sl$
with lowest weight vectors $v_1,{\ldots} ,v_N$, and
a collection $z_1,{\ldots} ,z_N$ of pairwise distinct complex numbers,
we consider the tensor product of the corresponding evaluation
representations: $\rho_1(z_1)\otimes{\ldots} \otimes\rho_N(z_N)$.
The crucial step is introducing a special grading in the space $V_N$ of this representation. Set
$
V_N^{(m)}=U^{(m)}(e\otimes\C[t])(v_1\otimes{\ldots} \otimes
v_N)\subset V_N
$,
where $e$ 
is the raising
operator in $\sl$ and
$U^{(m)}$ is spanned by homogeneous elements of degree $m$ in $t$. In
other words, $U^{(m)}$  is spanned by the monomials of the form
$e_{i_1}{\ldots} e_{i_k}$  with $i_1+{\ldots} +i_k=m$,
where $e_j=e\otimes t^j$. Then we consider the
corresponding filtration on $V_N$:
$
V_N^{(\le m)}=\sum_{k\le m} V_N^{(k)}
$
The fusion product of $\rho_1,{\ldots} ,\rho_N$ is the graded
representation with respect to the above filtration, which is realized in the space
\beq\label{gr}
V_N^*=\operatorname{gr}V_N=V_N^{(\le0)}\oplus V_N^{(\le1)}/V_N^{(\le0)}\oplus
V_N^{(\le2)}/V_N^{(\le1)}\oplus{\ldots} .
\eeq
The space $V_N^*[k]=V_N^{(\le k)}/V_N^{(\le k-1)}$ is the subspace of
elements of degree $k$, and elements of the form $x\otimes
t^l\in\sl\otimes \mathbb C[t]$ send $V_N^*[k]$ to $V_N^*[k+l]$.
The degree of an element with respect to this grading will be denoted
by $\widetilde\deg$.
It is proved in \cite{FL} that $V_N^*$ is an
$\sl\otimes(\C[t]/t^N)$-module that does not depend on $z_1,{\ldots}
,z_N$ provided that they are pairwise distinct. Moreover, $V_N^*$ is
isomorphic to $\rho_1\otimes{\ldots} \otimes\rho_N$ as an $\sl$-module.

We apply this construction to the case where
$\rho_1={\ldots} =\rho_N$ is the two-dimensional
irreducible representation of $\sl$ with the lowest weight
vector $v_0$.
In this case,
$V_N^*\simeq(\C^2)^{\otimes N}$ as an
$\sl$-module.
We equip $V_N^*$ with the inner product such that the corresponding representation
of $\sl$ is unitary. It is proved in \cite{FF} that an inductive limit of $V_N^*$
is isomorphic to the basic representation $L_{0,1}$ of
 $\widehat\sl$, so that we first establish a grading-preserving isomorphism of the finite-dimensional $\sl$-modules $V_N^*$ and $\sum_{k=0}^{n}M_{2k+1}\otimes H_{\pi_{k}}$ and then show that it can be extended to the inductive limits of the corresponding spaces.

\smallskip\noindent{\bf 2. The $q$-character, major index, and the finite-dimensional result.} Consider the decomposition of $V_N^*$ into irreducible $\sl$-modules:
$$
V_N^*=\bigoplus_{k=0}^nM_{2k+1}\otimes{\cal M}_k.
$$
By the
classical Schur--Weyl duality~\eqref{swfin}, we know that the
multiplicity space ${\cal M}_k$ coincides with the space $H_{\pi_{k}}$ of the irreducible representation of $\sN$ with the Young diagram $(n+k,n-k)$.
On the other hand, it
inherits the grading from $V_N^*$:
\beq\label{mult}
{\cal M}_k=\bigoplus_{i\ge0}{\cal M}_k[i],
\eeq
where ${\cal M}_k[i]={\cal M}_k\cap V^*[i]$.
Consider the corresponding $q$-character
$$
\operatorname{ch}_q{\cal M}_k=\sum_{i\ge0}q^i\dim{\cal M}_k[i].
$$
It was proved by Kedem \cite{Kedem} that
\beq\label{kedem}
\operatorname{ch}_q{\cal M}_k=q^{\frac{N(N-1)}2}\cdot K_{(n+k,n-k),1^N}(1/q),
\eeq
where $K_{\la,\mu}$ is the Kostka--Foulkes polynomial (see
\cite[Sec.~III.6]{Mac}).

Now we use the well-known combinatorial description of the Kostka--Foulkes polynomial due to Lascoux and Sch\"utzenberger \cite{LS}.
For a two-row partition $\la$, their formula reduces to
\beq\label{LS}
K_{\la,1^N}(q)=\sum_{\tau\in[\la]}q^{c(\tau)},
\eeq
where  $[\la]$ is the set of standard Young tableaux of shape $\la$
and $c(\tau)$ is the charge of a tableau $\tau\in T_N$, defined
as the sum of $i\le N-1$ such that in $\tau$ the element $i+1$ lies to
the right of $i$ (see \cite{Mac}).
But, obviously, for $\tau\in T_N$ we have
$\maj(\tau)=\frac{N(N-1)}2-c(\tau)$.
Then it follows from
\eqref{kedem} and \eqref{LS} that
\beq\label{dim}
\dim{\cal M}_k[i]=\#\{\tau\in[(n+k,n-k)]:\maj(\tau)=i\}.
\eeq

The major index defines a grading in the space $H_{\pi_{k}}$ (spanned by the standard Young tableaux of shape $(n+k,n-k)$), and hence in the whole space
$\X_{N}
=\sum_{k=0}^{n}M_{2k+1}\otimes H_{\pi_{k}}$, which we
equip with the standard inner product. We obtain the following finite-dimensional analog of Theorem~\ref{th2}.

\begin{proposition}\label{prop:fin}
There is a grading-preserving unitary isomorphism of $\sl$-modules between
$(V_N^*,\widetilde\deg)$ and $(\X_{N},\maj)$ such that
the multiplicity space ${\cal M}_k$ is spanned by the standard Young tableaux $\tau$ of shape
$(n+k,n-k)$ (and hence  ${\cal M}_k[i]$ is spanned by $\tau$ with $\maj(\tau)=i$).
\end{proposition}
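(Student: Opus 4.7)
The plan is to combine classical Schur--Weyl duality with the $q$-character identity \eqref{dim} to match graded dimensions, and then to promote that matching to an actual unitary grading-preserving $\sl$-module isomorphism. First, I would note that by classical Schur--Weyl duality \eqref{swfin}, $V_N^*\simeq(\C^2)^{\otimes N}$ decomposes as an $\sl$-module into isotypic components $M_{2k+1}\otimes{\cal M}_k$, while $\X_N$ is exactly $\sum_{k=0}^n M_{2k+1}\otimes H_{\pi_k}$; since ${\cal M}_k$ and $H_{\pi_k}$ both have dimension equal to the number of standard Young tableaux of shape $(n+k,n-k)$, the underlying $\sl$-module isomorphism exists and is essentially unique up to unitary automorphisms of the multiplicity spaces.

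Next I would verify that the fusion grading $\widetilde\deg$ on $V_N^*$ is preserved by the action of the subalgebra $\sl=\sl\otimes1\subset\sl\otimes\C[t]$: the generators $e,f,h$ correspond to $e_0,f_0,h_0$, which are homogeneous of $t$-degree zero, and one sees directly from the defining filtration $V_N^{(\le m)}$ (using $[f_0,e_j]=-h_j$, $[h_0,e_j]=2e_j$, and $f_0 v_0=0$) that $\sl$ carries $V_N^{(\le m)}$ into itself. Hence the isotypic decomposition refines the grading, and each ${\cal M}_k$ acquires the grading \eqref{mult}. At this point the key input is the identity \eqref{dim}, obtained by composing Kedem's computation \eqref{kedem}, the Lascoux--Sch\"utzenberger formula \eqref{LS}, and the elementary relation $\maj(\tau)=N(N-1)/2-c(\tau)$. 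This identity says precisely that the graded dimensions of ${\cal M}_k$ match the graded dimensions of the span of tableaux of shape $(n+k,n-k)$ with respect to $\maj$.

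With matching graded dimensions in hand, the construction is essentially Hilbert-space bookkeeping: the $\sl$-module $V_N^*$ carries the unitary inner product fixed in step~1 of the proof, so its isotypic decomposition is orthogonal and each ${\cal M}_k$ inherits a Hilbert structure; on the other side, $H_{\pi_k}$ has its standard orthonormal tableau basis. One then chooses, for each $k$ and each $i$, any unitary isomorphism from ${\cal M}_k[i]$ onto the span of tableaux of shape $(n+k,n-k)$ with $\maj=i$, assembles these into $\phi_k:{\cal M}_k\to H_{\pi_k}$, and forms $\bigoplus_k (\operatorname{id}_{M_{2k+1}}\otimes\phi_k)$. The main obstacle is genuinely encapsulated in \eqref{dim}; once that dimensional identity is granted, everything else is formal. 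The obvious nonuniqueness of the $\phi_k$ (one may twist independently on each graded piece) is a feature rather than a bug, and will reappear in the remark after Theorem~\ref{th2} about the nonuniqueness of the isomorphism in the inductive limit.
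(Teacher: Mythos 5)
Your proposal is correct and follows essentially the same route as the paper, whose proof simply cites the $\sl$-module isomorphism $V_N^*\simeq(\C^2)^{\otimes N}$ together with equation~\eqref{dim}; you have merely filled in the implicit steps (compatibility of the fusion grading with the $\sl\otimes 1$ action, the graded-dimension count, and the piecewise assembly of the unitary map). Nothing further is needed.
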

\begin{proof}
Follows from the fact that the fusion product
$V_N^*$ is isomorphic to $(\C^2)^{\otimes N}$ as an $\sl$-module and
equation~\eqref{dim}.
\end{proof}

\smallskip\noindent{\bf Remarks. 1.}
Observe that the isomorphism from Proposition~\ref{prop:fin} is not
unique.

\noindent{\bf 2.} The isomorphism from Proposition~\ref{prop:fin} determines
an action of the symmetric group $\sN$ on the space $V_N^*$. It does
not coincide with the original action of $\sN$ on $\C^{\otimes N}$.
\medskip

\smallskip\noindent{\bf 3. Embeddings and the limit.}
It is proved in \cite{FF} that there is an embedding
$
j_N:V_N^*\to V^*_{N+2}
$
equivariant with respect to the action of $\sl\otimes(\C[t^{-1}]/t^{-n})$, and
the corresponding inductive limit
$
{\cal V}=\lim(V_N^{*},j_N)
$
is isomorphic to the basic representation $L_{0,1}$ of
the affine Lie algebra $\widehat\sl$.
This embedding satisfies
\beq\label{embdeg}
\widetilde\deg(j_Nx)=\widetilde\deg(x)-(N+1).
\eeq

Since we are now considering
$\sl\otimes\C[t^{-1}]$ instead of $\sl\otimes\C[t]$, we should
slightly modify the previous constructions to take the minus sign into
account. Namely,
instead of~\eqref{mult} we now have
${\cal M}_{k}=\oplus_{i\ge0}{\cal M}_{k}[-i]$, and the isomorphism of
Proposition~\ref{prop:fin} identifies ${\cal M}_{k}[-i]$ with the space spanned
by the tableaux $\tau$ of
shape $(n+k,n-k)$ such that $\maj(\tau)=i$. Denote this isomoprhism
between $V^*_N$ and $\X_N$ by $\rho_N$. Observe that the only
conditions we impose on $\rho_N$ are as follows: (a) $\rho_N$ is a
unitary isomorphism
of $\sl$-modules and (b) $\rho_N\circ\widetilde\deg=-\maj$.

Now, since $L_{0,1}\simeq\lim(V_N^{*},j_N)$, $H_{\Pi}=\lim(\X_{N}, i_{N})$, and Proposition~\ref{prop:fin} holds,
in order to prove Theorem~\ref{th2} it suffices to show that we can choose
a sequence of isomorphisms $\rho_N$ such that the diagram
$$
\begin{CD}
V_N^*@>\rho_N>>\X_N\\
@VVj_NV @ VVi_NV\\
V_{N+2}^*@>\rho_{N+2}>>\X_{N+2}
\end{CD}
$$
is commutative for all $N$. We use induction on $N$. The base being
obvious, assume that we have already constructed $\rho_N$, and let us
construct $\rho_{N+2}$.

We have $V_{N+2}^*=j_N(V_N^*)\oplus (j_N(V_N^*))^\perp $. On the first subspace, we
set $\rho_{N+2}(x):=i_N(\rho_N(j_{N+2}^{-1}(x)))$. On the second
one, we define it in an arbitrary way to satisfy the desired
conditions~(a) and~(b). The fact that this definition is correct and
provides us with a desired isomorphism between $V_{N+2}^*$ and
$\X_{N+2}$ follows
from~\eqref{embdeg} and~\eqref{embmaj}. The theorem is proved.

\section{The key isomorphism in more detail}\label{sec:iso}

Our aim in this section is to study the isomorphism from Theorem~\ref{th2} in more detail. For this, we first give necessary background on the Fock space realization of the basic $\widehat\sl$-module.

\subsection{The Fock space realization of the basic $\widehat\sl$-module}\label{sec:fock}

Let $\F$ be the fermionic Fock space constructed as the infinite wedge
space over the linear space with basis
$\{u_k\}_{k\in\Z}\cup\{v_k\}_{k\in\Z}$.
That is, $\F$ is spanned by the semi-infinite forms
$u_{i_1}\wedge{\ldots}\wedge u_{i_k}\wedge v_{j_1}\wedge{\ldots}\wedge v_{j_l}\wedge
u_N\wedge v_N\wedge u_{N-1}\wedge v_{N-1}\wedge{\ldots}$,
$N\in\Z$, $i_1>{\ldots} >i_k>N$, $j_1>{\ldots} >j_l>N$,
and is equipped with the inner product in which such monomials are
orthonormal.
Let $\phi_k$ be the exterior multiplication by $u_k$
and $\psi_k$ be the exterior multiplication by $v_k$,
and denote by $\phi_k^*$, $\psi_k^*$ the corresponding adjoint
operators. Then this family of operators satisfies the canonical
anticommutation relations (CAR).
We consider the generating functions
$\phi(z)=\sum_{i\in\Z}\phi_iz^{-(i+1)}$,
$\phi^*(z)=\sum_{i\in\Z}\phi_i^*z^i$, and the same for $\psi$ and $\psi^{*}$.

Let $a^\phi_n$ and $a^\psi_n$
be the systems of bosons constructed from the fermions $\{\phi_k\}$ and
$\{\psi_k\}$, respectively: $a^\phi_n=\sum_{k\in\Z}\phi_{k}\phi^*_{k+n}$ for $n\ne0$ and
$a^\phi_0=\sum_{n=1}^\infty\phi_{n}\phi_{n}^*-\sum_{n=0}^\infty\phi_{-n}^*\phi_{-n}$,
and similarly for $a^\psi$. They satisfy the canonical commutation
relations (CCR),
i.e., form a representation of the Heisenberg algebra $\mathfrak A$.
Denote
$a^\phi(z)=\sum_{n\in\Z}a^\phi_nz^{-(n+1)}$, and similarly for $a^{\psi}$.

Let $V$ be the operator in $\F$ that shifts the indices by 1:
$$
V(w_{i_1}\wedge w_{i_2}\wedge{\ldots} )=V_0(w_{i_1})\wedge
V_0(w_{i_2})\wedge{\ldots},\quad V_0(u_i)=u_{i+1},\quad
V_0(v_i)=v_{i-1}.
$$

The vacuum vector in $\F$ is $\Om=u_{-1}\wedge v_{-1}\wedge
u_{-2}\wedge v_{-2}\wedge{\ldots} $. We also consider the family of
vectors
$$
\Om_0=\Om,\quad \Om_{2n}=V^{-n}\Om_0,\quad n\in\Z.
$$

In the space $\F$ we have a canonical representation of the affine
Lie algebra $\widehat\sl=\sl\otimes\C[t,t^{-1}]\oplus\C c\oplus\C d$, which is given by the following formulas.
Given $x\in\sl$, denote $X(z)=\sum_{n\in\Z}x_nz^{-(n+1)}$. Then
\begin{eqnarray*}
E(z)=\psi(z)\phi^*(z),\qquad
F(z)=\phi(z)\psi^*(z),\\
h_n=a^\psi_{-n}-a^\phi_{-n},\qquad
d=\frac{h_0^2}2+\sum_{n=1}^\infty h_{-n}h_n,\qquad c=1.
\end{eqnarray*}

We have
$
\F=\H_0\otimes\K_0+\H_1\otimes\K_1$,
where $\H_0\simeq L_{0,1}$ and $\H_1\simeq L_{1,1}$ are the
irreducible  level~1 highest weight
$\widehat\sl$-modules and  $\K_0$ and $\K_1$ are the multiplicity spaces.
Observe also that
$e_{-(N+1)}\Om_{-N}=\Om_{-(N+2)}$.

Note that the operators $a_n=\frac1{\sqrt{2}}h_n$
satisfy the CCR, i.e., form a system of free bosons, or
generate the Heisenberg algebra ${\mathfrak A}_h$.
The vectors $\{\Om_{2n}\}_{n\in\Z}$ introduced above are exactly singular vectors for
this Heisenberg algebra:
$h_k\Om_m=0$ for $k>0$, $h_0\Om_m=m\Om_m$. The representation of
${\mathfrak A}_h$ in $\H_0$ breaks into a direct sum of
irreducible representations:
\beq
\H_0=\bigoplus_{k\in\Z}\H_0[2k],
\eeq
where $\H_0[2k]$ is the charge~$2k$ subspace, i.e., the eigenspace of
$h_0$ with eigenvalue $2k$:
$$\H_0[2k]=\{v\in\H_0:h_0v=2kv\}=\C[h_0,h_1,{\ldots} ]\Om_{2k}.
$$

Now, given a representation of the affine Lie algebra $\widehat\sl$, we can
use the Sugawara construction to obtain the corresponding
representation of the Virasoro algebra $\Vir$. It can also be
described in the following way.  As noted above, the operators $a_n=\frac1{\sqrt{2}}h_n$
form a system of free bosons. Given
such a system, a representation of $\Vir$ can be constructed as
follows (\cite{Vir}; see also \cite[Ex.~9.17]{Kac}):
\beq\label{sug}
L_n=\frac12\sum_{j\in\Z}a_{-j}a_{j+n},\quad n\ne0;\qquad
L_0=\sum_{j=1}^\infty a_{-j}a_j.
\eeq
Thus we obtain a representation of $\Vir$ in $\F$ and, in particular,
in $\H_0$. In this representation, the algebras generated by the
operators of $\Vir$ and
$\sl\subset\widehat\sl$ are mutual commutants, and we have the decomposition
\beq\label{virdecomp}
\H_0=\bigoplus_{k=0}^\infty M_{2k+1}\otimes L(1,k^2),
\eeq
where $M_{2k+1}$ is the $(2k+1)$-dimensional irreducible $\sl$-module
and $L(1,k^2)$ is the irreducible Virasoro module with
central charge~1 and conformal dimension $k^2$.

The charge $k$ subspace $\H_0[k]$ contains a series of singular
vectors $\xi_{k,m}$ of $\Vir$  with energy $(k+m)^2$:
$$
L_n\xi_{k,m}=0 \mbox{ for }n=1,2,{\ldots} ,\qquad L_0\xi_{k,m}=(k+m)^2.
$$

Let us use the so-called
homogeneous vertex operator construction of the basic
representation of $\widehat\sl$ (\cite{FK}, see also \cite[Sec.~14.8]{Kac}). In this realization,
\beq\label{EF}
E(z)=\Ga_-(z)\Ga_+(z)z^{-h_0}V^{-1},\qquad
F(z)=\Ga_+(z)\Ga_-(z)z^{h_0}V,
\eeq
where
$$
\Ga_\pm(z)=\exp\left(\mp\sum_{j=1}^\infty \frac{z^{\pm j}}jh_{\pm j}\right)
$$
and the operators $\Ga_\pm(z)$ satisfy the commutation relation
\beq\label{Ga}
\Ga_+(z)\Ga_-(w)=\Ga_-(w)\Ga_+(z)\left(1-\frac zw\right)^2.
\eeq

Using the
boson--fermion correspondence (see \cite[Ch.~14]{Kac}), we can
identify $\H_0$ with the space $\La\otimes\C[q,q^{-1}]$, where $\La$ is
the algebra
of symmetric functions (see \cite{Mac}). In particular, consider
the charge~0 subspace $\H[0]=\H_0[0]$, which is
identified with $\La$. We can use the following representation of the
Heisenberg algebra generated by $\{h_n\}_{n\in\Z}$:
\beq\label{power}
h_n\leftrightarrow 2n\frac\partial{\partial p_n},\qquad
h_{-n}=p_n,\qquad n>0,
\eeq
where $p_j$ are Newton's power sums.
Note that the representation~\eqref{power} of ${\mathfrak A}_{h}$,
and hence the corresponding representation~\eqref{sug} of $\Vir$, are
not unitary with respect to the standard inner product in $\La$. To
make it unitary, we should consider the inner product in $\La$ defined
by
\beq\label{inner}
\langle p_\la,p_\mu\rangle=\de_{\la\mu}\cdot z_\la\cdot 2^{l(\la)},
\eeq
where $p_\la$ are the power sum symmetric functions, $z_\la=\prod_i
i^{m_i}m_i!$ for a Young diagram $\la$ with $m_i$ parts of length $i$, and
$l(\la)$ is the number of nonzero rows in $\la$.

Denote the singular vectors of $\Vir$ in $\H[0]$ by
$\xi_m:=\xi_{0,m}$. According to a result by Segal \cite{Segal}, in
the symmetric function realization~\eqref{power},
\beq\label{segal}
\xi_{n}\leftrightarrow c\cdot s_{(n^n)},
\eeq
where $s_{(n^n)}$ is the Schur function indexed by the $n\times n$
square Young diagram and $c$ is a numerical coefficient.

\subsection{Further analysis of the key isomorphism}\label{subsec:iso}

Comparing the structure~\eqref{serp} of the serpentine representation with \eqref{virdecomp}, we obtain the
following result.

\begin{corollary}
The space $H_{\Pi_k}$ of the $k$-serpentine representation of the infinite
symmetric group  has a natural
structure of the Virasoro module $L(1,k^2)$.
\end{corollary}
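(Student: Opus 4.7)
The plan is to read off the corollary directly from Theorem~\ref{th2} together with the Virasoro decomposition~\eqref{virdecomp} of the basic module. Concretely, Theorem~\ref{th2} supplies a unitary $\sl$-module isomorphism $\Phi\colon L_{0,1}\to H_\Pi$, while~\eqref{virdecomp} gives the $\sl\oplus\Vir$-module decomposition
\[
L_{0,1}=\bigoplus_{k=0}^\infty M_{2k+1}\otimes L(1,k^2),
\]
in which the Virasoro action comes from the Sugawara construction~\eqref{sug} and, crucially, commutes with the $\sl\subset\widehat\sl$ action (the two algebras are mutual commutants on $L_{0,1}$). Comparing this with the serpentine decomposition~\eqref{serp}, $H_\Pi=\bigoplus_{k=0}^\infty M_{2k+1}\otimes H_{\Pi_k}$, is the whole content of the argument.

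The first step is to note that since the irreducible $\sl$-modules $M_{2k+1}$ appearing in these two decompositions are pairwise non-isomorphic, any $\sl$-module isomorphism $\Phi$ must preserve the isotypic decomposition and therefore restrict, for every $k$, to a linear isomorphism of the multiplicity spaces $L(1,k^2)\to H_{\Pi_k}$ (this is just the standard Schur-type identification of multiplicity spaces). Second, I would transport the Virasoro action on $L(1,k^2)$ through this isomorphism of multiplicity spaces; this defines operators on $H_{\Pi_k}$ that, by construction, satisfy the Virasoro relations and give $H_{\Pi_k}$ the structure of the module $L(1,k^2)$.

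The one point that needs a remark, rather than a real calculation, is that the Virasoro module structure on $H_{\Pi_k}$ does not depend on the (non-unique) choice of $\Phi$. Any two choices $\Phi$, $\Phi'$ differ by an $\sl$-equivariant unitary automorphism of $L_{0,1}$, which on each isotypic component acts as $\mathrm{id}_{M_{2k+1}}\otimes U_k$ for some unitary $U_k$ on $L(1,k^2)$; the two resulting Virasoro actions on $H_{\Pi_k}$ are then intertwined by $U_k$, so the abstract module structure $L(1,k^2)$ is the same. This is why the word \emph{natural} in the statement is justified even though $\Phi$ itself is not canonical.

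I do not expect any real obstacle in this corollary: all the work has already been done in Theorem~\ref{th2} and in the classical Fock-space computation recalled in Section~\ref{sec:fock}. The only mildly delicate point, mentioned in the paragraph above, is the well-definedness up to isomorphism, and this is a one-line application of Schur's lemma. An explicit description of the Virasoro generators directly in terms of the Gelfand--Tsetlin basis of $H_{\Pi_k}$ would of course be much harder, but that is precisely the ``intriguing problem'' flagged in Remark~2 after Theorem~\ref{th2} and is not required for the corollary.
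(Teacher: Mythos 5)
Your argument is exactly the paper's: the corollary is obtained by comparing the decomposition~\eqref{serp} of $H_\Pi$ with the Virasoro decomposition~\eqref{virdecomp} of $L_{0,1}$ through the isomorphism of Theorem~\ref{th2}, identifying multiplicity spaces of the pairwise non-isomorphic $\sl$-modules $M_{2k+1}$. The paper states this in one line; your version merely makes the Schur-lemma identification and the independence of the choice of isomorphism explicit, which is fine.
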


Our aim is to study this Virasoro representation in $\Pi_k$ (or, which
is equivalent, the corresponding representation of the infinite
symmetric group in the Fock space). In particular, from the known
theory of the basic module $L_{0,1}$, we immediately obtain the following
result.

\begin{corollary}\label{cor:vir}
In the above realization of the Virasoro module $L(1,k^2)$,
the Gelfand--Tsetlin basis of $H_{\Pi_k}$ (which consists of the infinite
two-row Young tableaux tail-equivalent to $\tau_k$) is the eigenbasis of
$L_0$, and the eigenvalues are given by the stable major index $r$:
$$
L_0\tau=r(\tau)\tau.
$$
\end{corollary}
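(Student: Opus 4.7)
The plan is to deduce the corollary by transporting, via Theorem~\ref{th2}, the standard identity $L_0=\deg_H$ on the basic module $L_{0,1}$ to the serpentine space $H_\Pi$. First I would invoke the preceding corollary: the Virasoro module structure on $H_{\Pi_k}$ is by construction the pushforward, under the grading-preserving unitary $\sl$-isomorphism $\phi\colon L_{0,1}\to H_\Pi$ of Theorem~\ref{th2}, of the Sugawara $\Vir$-action on the factor $L(1,k^2)$ appearing in the decomposition \eqref{virdecomp}. Since $\phi$ intertwines $\deg_H$ with $\deg_r$, the corollary reduces to the claim that the Sugawara operator $L_0$ from \eqref{sug} acts on each $\deg_H$-homogeneous subspace of $L_{0,1}$ as multiplication by $\deg_H$.

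To establish $L_0=\deg_H$ on $L_{0,1}$ I would proceed blockwise with respect to \eqref{virdecomp}. Both operators annihilate the vacuum $\Om$, and the matching commutation relations $[L_0,h_{-n}]=n\,h_{-n}=[\deg_H,h_{-n}]$ yield $L_0=\deg_H$ on the Heisenberg orbit $\C[h_{-1},h_{-2},\ldots]\Om$, which together with $\sl$-equivariance covers the $k=0$ block $M_1\otimes L(1,0)$. For a general block the Virasoro highest weight vector $\xi_{0,k}$ lies in the charge-$0$ subspace, and Segal's formula \eqref{segal} identifies it with a scalar multiple of the Schur function $s_{(k^k)}$; under the identification \eqref{power} this is a polynomial in $h_{-1},h_{-2},\ldots$ of total $\deg_H$-weight $k^2$, matching $L_0\xi_{0,k}=k^2\xi_{0,k}$. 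The equality propagates to all Virasoro descendants via $[\deg_H,L_{-n}]=n\,L_{-n}=[L_0,L_{-n}]$ and to the tensor factor $M_{2k+1}$ by the fact that $\sl\subset\widehat\sl$ acts by $\deg_H$-preserving operators commuting with $\Vir$.

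Transporting $L_0=\deg_H$ through $\phi$ yields $L_0=\deg_r$ on $H_\Pi$. Restricting to $H_{\Pi_k}$ and recalling that, by the very definition of the stable major index grading, every Gelfand--Tsetlin basis vector $\tau\in\T_k$ is a $\deg_r$-eigenvector with eigenvalue $r(\tau)$, one concludes $L_0\tau=r(\tau)\tau$. The main technical obstacle is the middle paragraph, namely the extension of $L_0=\deg_H$ from the charge-$0$ sector to all of $L_{0,1}$: since \eqref{sug} defines $L_0$ purely in terms of the Heisenberg bosons, one must genuinely pin down the $\deg_H$-weight of the Virasoro singular vectors, for which Segal's formula \eqref{segal} together with \eqref{power} seems the most transparent route.
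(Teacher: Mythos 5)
Your proposal is correct and takes essentially the route the paper intends: the paper deduces this corollary in one line from Theorem~\ref{th2} together with the ``known theory of the basic module,'' namely that the Sugawara operator $L_0$ coincides with the homogeneous degree operator on $L_{0,1}$, which is precisely what your middle paragraph verifies before transporting the identity through the isomorphism. One minor remark: your Heisenberg-orbit argument already establishes $L_0=\deg_{H}$ on the whole charge-zero subspace $\C[h_{-1},h_{-2},\ldots]\Om$, so the appeal to Segal's formula for $\xi_{0,k}$ is redundant (those singular vectors lie in that orbit, and their energy $k^2$ is part of their definition); apart from this harmless detour the details are sound.
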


Now we see that the isomorphism in Theorem~\ref{th2} is in fact defined up to the commutant of $L_{0}$ in each $\Pi_{k}$.

Let $\om_{-2k}$ be the lowest vector in $M_{2k+1}$. Then
a natural basis of $\cal V$ is
$
\{e_0^m\om_{-2k}\otimes\tau: m=0,1,{\ldots} ,2k,\;\tau\in\T_k\}
$.
Denoting ${\cal V}_k=M_{2k+1}\otimes H_{\Pi_k}$ and ${\cal
V}_k[0]=\{v\in{\cal V}_k:h_0v=0\}$, we have ${\cal
V}_k[0]=e_0^k\om_{-2k}\otimes H_{\Pi_k}$, so that
we may identify
${\cal V}_k[0]$ with $H_{\Pi_k}$ via the correspondence
$$
c(t)\cdot e_0^k\om_{-2k}\otimes t\leftrightarrow t,
$$
where $c(t)$ is a normalizing constant. On the other hand, it is shown in \cite{FF} that
$V_{2n}^*\simeq\C[e_0,{\ldots} ,e_{-(2n-1)}]\Om_{-2n}\subset\F$
as an $\sl\otimes(\C[t^{-1}]/t^{-2n})$-module,
and the limit space $\cal V$ coincides
with $\H_0$. Under this  correspondence,
the charge~0 subspace $\H[0]$ is identified with ${\cal
V}[0]=\{v\in{\cal V}:h_0v=0\}$.
Thus we have
\beq\label{corr0}
\H[0]
\simeq H_{\Pi}[0]=\bigoplus_{k=0}^\infty H_{\Pi_k},
\eeq
where $H_{\Pi}[0]$ is the space spanned by all serpentine tableaux, which is the space of the countable sum of the $k$-serpentine representations $\Pi_{k}$ of $\sinf$ without multiplicities, and the following corollary holds.

\begin{corollary}
The space  $H_{\Pi}[0]$ has a
structure of an irreducible representation of the Heisenberg algebra
$\mathfrak A$.
\end{corollary}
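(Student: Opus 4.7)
The plan is a short transport-of-structure argument. The only input needed beyond what is already recorded in the excerpt is the linear identification $H_{\Pi}[0]\simeq\H[0]$ from \eqref{corr0}, together with the classical irreducibility of the vacuum Fock module of the Heisenberg algebra.

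First I would use \eqref{corr0} to pull back to $H_{\Pi}[0]$ the action of the operators $\{h_{n}\}_{n\in\Z}$ (equivalently, of the bosons $a_{n}=\tfrac{1}{\sqrt{2}}h_{n}$) that act on the charge-zero subspace $\H[0]\subset\H_{0}\simeq L_{0,1}$. Since every $h_{n}$ commutes with $h_{0}$, this action preserves the $h_{0}$-eigenspace $\H[0]$, so the pullback gives a well-defined action of the Heisenberg algebra $\mathfrak A$ on $H_{\Pi}[0]$. Then I would invoke the explicit description
\[
\H[0]=\C[h_{-1},h_{-2},\ldots]\,\Om_{0},\qquad h_{k}\Om_{0}=0\ \text{for}\ k>0,
\]
already recalled in Sec.~\ref{sec:fock}, which presents $\H[0]$ as the standard vacuum Fock module of the Heisenberg algebra with nonzero central charge. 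Such a module is classically irreducible (see, e.g., \cite[Ch.~9]{Kac}), and hence so is the transported module on $H_{\Pi}[0]$, which is the claim of the corollary.

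The argument faces no real obstacle, since the statement is essentially a repackaging of a standard Fock-space fact through the identification \eqref{corr0}. The genuinely nontrivial question — not claimed by the corollary — is to give an intrinsic, combinatorial formula for the action of the bosons $h_{n}$ on the Gelfand--Tsetlin basis $\bigcup_{k}\T_{k}$ of serpentine tableaux; this would require explicit control over the isomorphism of Theorem~\ref{th2}, which, as the authors note, is in general not known.
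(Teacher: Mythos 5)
Your proposal is correct and matches the paper's (implicit) argument: the corollary is stated there as an immediate consequence of the identification \eqref{corr0} together with the already-recorded fact that $\H_0$ decomposes under the Heisenberg algebra into the irreducible charge subspaces $\H_0[2k]$, of which $\H[0]$ is one. Your filling-in of the standard irreducibility of the vacuum Fock module is exactly the intended justification, so there is nothing to add.
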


Now, using results of \cite{FF}, one can easily prove the following lemma.

\begin{lemma}\label{l:ebasis}
A basis in $F_{2n}=\C[e_0,{\ldots},e_{-(2n-1)}]\Om_{-2n}$ is
$$
\{e_0^{i_0}e_{-1}^{i_1}{\ldots} e_{-(2n-1)}^{i_{2n-1}}:0\le k\le
2n-(i_0+{\ldots} +i_{2n-1})\}\Om_{-2n}.
$$
In particular, a basis of $F_{2n}[0]=F_{2n}\cap\H[0]$ is
\beq\label{ebasis0}
\{\prod e_0^{i_0}e_{-1}^{i_1}{\ldots}e_{-n}^{i_n}:i_0+i_1+{\ldots}
+i_{n}=n\}\Om_{-2n}.
\eeq
\end{lemma}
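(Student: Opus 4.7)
My plan is to deduce the lemma from the explicit presentation of $V_{2n}^*\simeq F_{2n}$ established in \cite{FF}, combined with a weight decomposition under $h_0$ and a straightforward dimension count. Since $F_{2n}\simeq(\C^2)^{\otimes 2n}$ as an $\sl$-module, we have $\dim F_{2n}=2^{2n}$ and $\dim F_{2n}[0]=\binom{2n}{n}$; these will serve as the target cardinalities.

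For the first statement, I would note that by definition $F_{2n}$ is spanned by the monomials $e_0^{i_0}e_{-1}^{i_1}\cdots e_{-(2n-1)}^{i_{2n-1}}\Om_{-2n}$, and appeal to the explicit monomial basis constructed in \cite{FF} to cut this spanning set down to a subset of cardinality $2^{2n}$ satisfying the constraint indicated in the lemma. Since this matches $\dim F_{2n}$, linear independence is equivalent to spanning, which is automatic.

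For the charge-zero statement, the first step is to use the commutation relation $[h_0,e_{-j}]=2e_{-j}$ together with $h_0\Om_{-2n}=-2n\Om_{-2n}$: the $h_0$-eigenvalue of such a monomial equals $-2n+2\sum_j i_j$, so it lies in $F_{2n}[0]$ precisely when $\sum_j i_j=n$. The crucial further reduction is to eliminate all occurrences of modes $e_{-j}$ with $j>n$; for this I would invoke the straightening relations in $F_{2n}$ supplied by \cite{FF}, which in the charge-zero sector rewrite any such monomial as a combination of products involving only $e_0,\ldots,e_{-n}$. A final dimension count completes the argument: the compositions $i_0+\cdots+i_n=n$ into $n+1$ nonnegative parts are counted by $\binom{2n}{n}=\dim F_{2n}[0]$, so the proposed set in \eqref{ebasis0} has the correct cardinality and hence is a basis.

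The main obstacle is precisely this reduction in the second part, namely, ruling out any genuine contribution of the modes $e_{-j}$ with $j>n$ in $F_{2n}[0]$. The remaining steps are routine weight and dimension bookkeeping, but this reduction hinges on the specific relations of the fusion-product construction from \cite{FF}; deriving them from scratch would be a substantial combinatorial task in its own right.
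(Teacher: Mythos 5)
Your proposal is correct and matches the paper's intent: the paper gives no argument at all for this lemma beyond the remark that it follows "using results of \cite{FF}," and your route --- defer the structural input (the monomial spanning/straightening relations of the fusion product) to \cite{FF}, then do the $h_0$-weight computation and the dimension counts $\dim F_{2n}=2^{2n}$, $\dim F_{2n}[0]=\binom{2n}{n}$ against the $\binom{2n}{n}$ compositions of $n$ into $n+1$ parts --- is exactly the bookkeeping the authors leave implicit. You also correctly flag that the only genuinely nontrivial step (eliminating the modes $e_{-j}$ with $j>n$ in the charge-zero sector, and more generally the cardinality cut in the first part) is precisely what must be imported from \cite{FF}, which is the same level of reliance the paper itself adopts.
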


On the other hand, as mentioned
above, $\H[0]$ can be identified with the algebra of symmetric
functions $\La$ via~\eqref{power}.
Denote by $\Phi$ the obtained isomorphism between $H_{\Pi}[0]$ and $\La$, which thus associates with every serpentine tableau $\tau\in\T$ a symmetric function $\Phi(\tau)\in\La$ such
that $r(\tau)=\deg\Phi(\tau)$.

\begin{proposition}\label{prop:segal}
Under the isomorphism $\Phi$, the principal
tableaux correspond to the Schur functions with square Young
diagrams:
$$\Phi(\tau_k)={\rm const}\cdot s_{(k^k)}.
$$
\end{proposition}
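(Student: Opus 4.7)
The plan is to identify $\Phi(\tau_k)$ as a nonzero scalar multiple of the Virasoro singular vector $\xi_k\in\H[0]$, and then to invoke Segal's formula~\eqref{segal} to recognise $\xi_k$ as a multiple of $s_{(k^k)}$.

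First, I would confirm by direct inspection of the descent set of $\tau_k$ that $r(\tau_k)=k^2$ (as recorded just after~\eqref{r}; concretely, $\maj([\tau_k]_{2n})=\sum_{j=k}^{n-1}(2j+1)=n^2-k^2$). By Corollary~\ref{cor:vir} this yields $L_0\tau_k=k^2\tau_k$.

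Second, combining Theorem~\ref{th2} with the decomposition~\eqref{virdecomp}, the $k$-serpentine multiplicity space $H_{\Pi_k}$ is identified with the irreducible Virasoro module $L(1,k^2)$. In $L(1,k^2)$ the minimal $L_0$-eigenvalue is the conformal dimension $k^2$, and the corresponding eigenspace is one-dimensional, spanned by the highest weight vector of $\Vir$. Since $\tau_k$ is a nonzero vector of $L_0$-eigenvalue $k^2$, it must therefore span this eigenspace. Transporting $\tau_k$ through the identifications ${\cal V}_k[0]\simeq H_{\Pi_k}$ (introduced in the paragraph preceding Lemma~\ref{l:ebasis}) and $\H[0]\simeq H_{\Pi}[0]$ from~\eqref{corr0}, its image in $\H[0]$ must be a nonzero scalar multiple of the highest weight vector of the $L(1,k^2)$ summand, i.e.\ of the singular vector $\xi_k=\xi_{0,k}$.

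Third and last, Segal's theorem~\eqref{segal} then says that, under the bosonic realisation~\eqref{power} identifying $\H[0]$ with $\La$, the vector $\xi_k$ corresponds to a constant multiple of the Schur function $s_{(k^k)}$. Composing gives $\Phi(\tau_k)=\mathrm{const}\cdot s_{(k^k)}$, as claimed. All the ingredients are already in place, so no serious obstacle remains: the argument is essentially a concatenation of Corollary~\ref{cor:vir}, the Virasoro decomposition~\eqref{virdecomp}, and Segal's result~\eqref{segal}. The only delicate point is bookkeeping of the various ${\mathfrak A}_h$-equivariant, grading-preserving identifications, so as to make sure that the highest weight vector of the $L(1,k^2)$ summand of $H_{\Pi}$ on one side is matched with the singular vector $\xi_k$ of $\Vir$ in $\H[0]$ on the other; but this is automatic from the fact that all the identifications preserve the charge and $L_0$-grading.
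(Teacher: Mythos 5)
Your proof is correct and takes essentially the same route as the paper's: identify $\tau_k$ (equivalently $e_0^k\om_{-2k}\otimes{\tau_k}$ in ${\cal V}_k[0]$) with the Virasoro singular vector $\xi_k$ and then invoke Segal's formula~\eqref{segal}. Your $L_0$-eigenvalue argument merely spells out the step the paper dismisses as ``not difficult to see.''
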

\begin{proof}
Follows from Segal's \cite{Segal} result~\eqref{segal}, since
it is not difficult to see that the singular vector of $\Vir$ in ${\cal
V}_k[0]$ is just $e_0^k\om_{-2k}\otimes\tau_k$.
\end{proof}

Denote
by $T^{(N)}$ the (finite) set of infinite two-row tableaux that coincide with some
$\tau_n$, $n=0,1,{\ldots} $, from the $N$th level.

\begin{proposition}\label{prop:square}
Let $H_{\Pi}^{(N)}$ be
the subspace in $H_{\Pi}[0]$ spanned by all $\tau\in T^{(N)}$. Then
$$
\Phi(H_{\Pi}^{(2k)})=\La_{k\times k},
$$
where $\La_{k\times k}$ is the subspace in $\La$ spanned by the
Schur functions indexed by Young diagrams lying in the $k\times k$
square.
\end{proposition}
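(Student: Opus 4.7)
The plan is to reduce the statement to a Fock-space computation. By unwinding the definition of $T^{(2k)}$, each tableau $\tau \in T^{(2k)}$ has the form $i_{\infty}(\sigma)$ for some two-row standard Young tableau $\sigma \in T_{2k}$, because the iterated embedding $i_{2k}, i_{2k+2}, \ldots$ appends exactly the serpentine tail beyond level $2k$. Hence $H_{\Pi}^{(2k)}$ is the image of $\operatorname{span}(T_{2k})$ (the charge-zero part of $\X_{2k}$) under $\X_{2k} \hookrightarrow H_{\Pi}$, and the commuting square from the proof of Theorem~\ref{th2} identifies $H_{\Pi}^{(2k)}$ under $\Phi$ with the image of $V_{2k}^{*}[0] = F_{2k}[0]$ in ${\cal V} = \H_{0}$. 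Since $\dim F_{2k}[0] = \binom{2k}{k} = \dim \La_{k\times k}$, it is enough to prove the inclusion $\Phi(H_{\Pi}^{(2k)}) \subseteq \La_{k\times k}$.

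By Lemma~\ref{l:ebasis}, $F_{2k}[0]$ is spanned by the vectors $e_{0}^{i_{0}} e_{-1}^{i_{1}} \cdots e_{-k}^{i_{k}} \Omega_{-2k}$ with $i_{0}+\cdots+i_{k}=k$, which appear as the coefficients of the monomials $z_{1}^{\alpha_{1}} \cdots z_{k}^{\alpha_{k}}$ with $\alpha_{i} \in \{-1, 0, \ldots, k-1\}$ in the generating series $E(z_{1}) \cdots E(z_{k}) \Omega_{-2k}$. I would compute this series by iterating \eqref{EF} and pushing all $\Gamma_{+}(z_{j})$ past the accumulated $\Gamma_{-}(z_{l})$'s via \eqref{Ga}: each $V^{-1}$ shift produces a factor $z_{j}^{2j-2}$ from $z_{j}^{-h_{0}}$, while the commutations produce factors $(1-z_{j}/z_{l})^{2}$ for $l>j$. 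The $z_{j}^{2j-2}$ prefactors telescope against the denominators of the $(1-z_{j}/z_{l})^{2}$'s, leaving
$$
E(z_{1}) \cdots E(z_{k}) \Omega_{-2k} = \prod_{i<j} (z_{i}-z_{j})^{2} \cdot \Gamma_{-}(z_{1}) \cdots \Gamma_{-}(z_{k}) \Omega_{0}.
$$
Under the symmetric-function realization \eqref{power}, the Cauchy identity gives
$$
\Gamma_{-}(z_{1}) \cdots \Gamma_{-}(z_{k}) \Omega_{0} \;\longleftrightarrow\; \sum_{l(\lambda) \le k} s_{\lambda}(z_{1}^{-1}, \ldots, z_{k}^{-1}) \, s_{\lambda}.
$$

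It remains to perform a support analysis. Using the Weyl bialternant formula and clearing denominators,
$$
\prod_{i<j}(z_{i}-z_{j})^{2} \, s_{\lambda}(z_{1}^{-1}, \ldots, z_{k}^{-1}) = \det\bigl(z_{i}^{\,j-1}\bigr) \, \det\bigl(z_{i}^{\,j-1-\lambda_{j}}\bigr),
$$
so the monomial $z_{1}^{\alpha_{1}} \cdots z_{k}^{\alpha_{k}}$ acquires a nonzero coefficient only if the strictly increasing sequence $\bigl\{j-1-\lambda_{j}\bigr\}_{j=1}^{k}$ occurs among $\bigl\{\alpha_{i} - \sigma(i) + 1\bigr\}_{i=1}^{k}$ for some permutation $\sigma$. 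Its minimum $-\lambda_{1}$ is therefore at least $\min_{i} \alpha_{i} - \max_{i} \sigma(i) + 1 \ge -1-k+1 = -k$, forcing $\lambda_{1} \le k$; together with $l(\lambda) \le k$, this gives $\lambda \subset k\times k$. Consequently $\Phi(H_{\Pi}^{(2k)}) \subseteq \La_{k\times k}$, and equality follows by matching dimensions.

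The main technical obstacle is the vertex-operator bookkeeping leading to the displayed identity for $E(z_{1})\cdots E(z_{k})\Omega_{-2k}$: one has to verify carefully that the charge-shift and sign factors coming from the $z^{-h_{0}}$ and $V^{-1}$ operators telescope exactly into $\prod_{i<j}(z_{i}-z_{j})^{2}$. Once this closed form is in hand, both the Cauchy expansion and the determinantal support analysis are essentially mechanical.
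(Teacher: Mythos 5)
Your proof is correct, but it takes a genuinely different route from the paper's. The paper disposes of this proposition in two lines: after the identification $H_{\Pi}^{(2k)}\leftrightarrow F_{2k}[0]$ (which you also establish, via the commuting square from the proof of Theorem~\ref{th2}), it simply cites the result of \cite{FF1} that $F_{2k}[0]$ corresponds to $\La_{k\times k}$ in the symmetric-function realization. You instead prove that input from scratch. Your closed form $E(z_1)\cdots E(z_k)\Om_{-2k}=\prod_{i<j}(z_i-z_j)^2\,\Ga_-(z_1)\cdots\Ga_-(z_k)\Om_0$ is the same identity the paper derives inside the proof of Theorem~\ref{th3}, there written as $\prod_j z_j^{2(k-j)}\prod_{j<i}(1-z_i/z_j)^2\,\Ga_-(z_k)\cdots\Ga_-(z_1)\Om_0$ (the two agree after absorbing the denominators of $(1-z_i/z_j)^2$ into the monomial prefactor); the Cauchy expansion is likewise the paper's, and your determinantal support bound is sound: a term of $\det(z_i^{j-1})\det(z_i^{j-1-\la_j})$ contributing to a monomial with all exponents $\ge -1$ forces $\la_1\le\si(i)\le k$ for the index $i$ matched to the first column, which together with $l(\la)\le k$ from the Cauchy sum and the count $\dim F_{2k}[0]=\binom{2k}{k}=\dim\La_{k\times k}$ closes the argument. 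In effect you have re-derived the cited fact by specializing the computation the paper reserves for Theorem~\ref{th3}; indeed the paper remarks just before that theorem that Proposition~\ref{prop:square} follows from formula~\eqref{gensegal} by counting dimensions, which is exactly the closure step you use. Your version buys self-containedness (no appeal to \cite{FF1}) at the cost of the vertex-operator bookkeeping you flag; the paper's version buys brevity and defers the computation to where it yields the stronger, explicit formula~\eqref{gensegal}.
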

\begin{proof}
From all the above identificatons,
$H_{\Pi}^{(2k)}\leftrightarrow F_{2k}[0]$. Now the claim follows from the result
proved in \cite{FF1}  that in the symmetric functions realization $F_{2k}[0]$
corresponds to $\La_{k\times k}$.
\end{proof}

In the next theorem we refine this result, giving an explicit formula for the Schur  basis in
$\La_{k\times k}$ in terms of the basis~\eqref{ebasis0} in $H_{\Pi} ^{(2k)}\simeq
F_{2k}[0]$. In fact, we would like to have an explicit formula for $\Phi$ or $\Phi^{-1}$, expressing, say, a Schur function in terms of serpentine tableaux. At the moment we cannot provide such a general formula, but the theorem below is a step toward solving this problem, reducing it to describing the action of the operators $e_{-m}$ in the space of serpentine tableaux. Besides, Propositions~\ref{prop:segal} and \ref{prop:square} can easily be derived from formula~\eqref{gensegal}, the former by taking $\nu=(k^{k})$ and the latter by counting the dimensions.

\begin{theorem}\label{th3}
In the symmetric functions realization, the correspondence between the Schur function basis in
$\La_{k\times k}$ and the basis~\eqref{ebasis0} in $H_{\Pi}^{(2k)}\simeq
F_{2k}[0]$ is given by
\beq\label{gensegal}
s_\nu=\sum_{\mu=(0^{r_0}1^{r_1}2^{r_2}{\ldots} )\subset(k^k)}\frac{K_{\nu\mu}}
{\prod_{j=0}^k r_j!}e_{-(k-\mu_1)}{\ldots}
e_{-(k-\mu_k)}\Om_{-2k},\qquad\nu\subset(k^{k}),
\eeq
where $K_{\la\mu}$ are Kostka numbers.
\end{theorem}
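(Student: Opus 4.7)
The plan is to reduce the formula to the classical monomial expansion $s_\nu=\sum_\mu K_{\nu\mu}m_\mu$ of Schur functions. Specifically, it suffices to prove that under the isomorphism $\Phi:H_\Pi^{(2k)}\to\La_{k\times k}$, each basis vector
$$\psi_\mu:=e_{-(k-\mu_1)}\cdots e_{-(k-\mu_k)}\Om_{-2k}$$
corresponds to $\prod_j r_j!(\mu)\cdot m_\mu(x)$ in $\La$. The stated formula then follows immediately, since $\sum_\mu \bigl(K_{\nu\mu}/\prod r_j!(\mu)\bigr)\cdot\prod r_j!(\mu)\cdot m_\mu(x)=\sum_\mu K_{\nu\mu}m_\mu(x)=s_\nu(x)$.

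To establish this correspondence, I would compute the generating function $E(z_1)\cdots E(z_k)\Om_{-2k}$ in two ways. Using the vertex operator formula~\eqref{EF} and iteratively applying the commutation relation~\eqref{Ga} to normal-order all $\Ga_+(z_i)$'s past the $\Ga_-(z_j)$'s, the monomial prefactors coming from $z_i^{-h_0}V^{-1}$ acting on the successively shifted vacua $\Om_{-2k+2(i-1)}$ combine with the resulting $\prod_{i<j}(1-z_i/z_j)^2$ to telescope into a clean Vandermonde-squared factor:
$$E(z_1)\cdots E(z_k)\Om_{-2k}\;=\;\prod_{i<j}(z_j-z_i)^2\cdot\Ga_-(z_1)\cdots\Ga_-(z_k)\Om_0.$$
In the symmetric function realization \eqref{power} with $\Om_0\leftrightarrow 1$, this becomes $\Delta(z)^2\prod_iH(z_i)$, which by Cauchy's identity equals $\sum_\nu\Delta(z)^2\,s_\nu(z_1,\ldots,z_k)\,s_\nu(x)$.

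On the other hand, since $[e_m,e_n]=0$, the expansion of the left-hand side as a formal series in the $z_i$'s is symmetric, so grouping monomials by $S_k$-orbits of compositions produces a sum in which each $\psi_\mu$ for $\mu\subset(k^k)$ appears paired with an explicit monomial symmetric polynomial in $z$, weighted by the orbit stabilizer $\prod_j r_j!(\mu)$. Matching coefficients of $s_\nu(x)$ with those on the vertex-operator side, and invoking the classical Kostka expansion $s_\nu=\sum_\mu K_{\nu\mu}m_\mu$, yields the desired identification $\Phi(\psi_\mu)=\prod r_j!(\mu)\cdot m_\mu(x)$. The main obstacle I expect is this bookkeeping between the Vandermonde-squared factor and the $S_k$-orbit sum, together with the reindexing between the partition $\mu$ appearing in the theorem (via $e_{-(k-\mu_i)}$) and the partition data $(k-\mu_k,\ldots,k-\mu_1)$ naturally emerging from the vertex operator calculation; once the indexing is aligned, the content reduces to Cauchy's identity and the Kostka expansion.
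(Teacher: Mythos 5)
Your proposal is correct and follows essentially the same route as the paper: compute $E(z_1)\cdots E(z_k)\Om_{-2k}$ via the vertex operator formulas \eqref{EF} and \eqref{Ga}, obtain the Vandermonde-squared factor times $\Ga_-(z_1)\cdots\Ga_-(z_k)\Om_0$, expand by the Cauchy identity, symmetrize over $S_k$-orbits to produce monomial symmetric functions with the stabilizer weights $\prod_j r_j!$, and conclude via the Kostka expansion together with the constant-term orthogonality of $a_\de(z)a_{\la+\de}(z^{-1})$. Your intermediate reformulation $\Phi\bigl(e_{-(k-\mu_1)}\cdots e_{-(k-\mu_k)}\Om_{-2k}\bigr)=\prod_j r_j!\cdot m_\mu$ is equivalent to \eqref{gensegal} by the unitriangularity of the Kostka matrix, and the bookkeeping you flag is exactly what the paper handles in passing from its equation \eqref{comp1} to \eqref{comp2}.
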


\begin{proof}
We generalize Wasserman's \cite{Was} proof of Segal's
result~\eqref{segal} (a similar computation is also given in an earlier paper~\cite{BFJ}).

Let $0\le i_1,{\ldots} ,i_k\le k$. Then, obviously,
$$
e_{-i_1}{\ldots} e_{-i_k}\Om_{-2k}=\left[\prod_{j=1}^kz_j^{i_j-1}\right]
E(z_k){\ldots} E(z_1)\Om_{-2k},
$$
where by $[\mbox{monomial}]F(z_1,{\ldots} ,z_m)$ we denote the coefficient of this
monomial in $F(z_1,{\ldots} ,z_m)$ (in particular, $[\mbox{1}]F(z_1,{\ldots} ,z_m)$ is the constant term of $F$). Now, using the
representation~\eqref{EF}, the commutation relation~\eqref{Ga}, and
the obvious facts that $V^{-k}\Om_{-2k}=\Om_0$ and
$\Ga_+(z)\Om_0=\Om_0$, we obtain
$$
E(z_k){\ldots}E(z_1)\Om_{-2k}=\prod_{j=1}^kz_j^{2(k-j)}
\prod_{1\le j<i\le k}\left(1-\frac{z_i}{z_j}\right)^2\Ga_-(z_k){\ldots}
\Ga_-(z_1)\Om_0.
$$
Observe that, in view of~\eqref{power} and the well-known fact from
the theory of symmetric functions, $\Ga_-(z)$ is
exactly the generating function of the complete symmetric functions. Hence,
expanding the product $\Ga_-(z_k){\ldots}\Ga_-(z_1)\Om_0$ by the Cauchy identity
(\cite[I.4.3]{Mac}) and making simple transformations, we obtain
$$
E(z_k){\ldots}E(z_1)\Om_{-2k}=
(-1)^{k(k-1)/2}
\prod_{j=1}^kz_j^{k-1}a_\de(z)a_\de(z^{-1})
\sum_{\la:\,l(\la)\le k} s_\la(z^{-1})s_\la,
$$
where
$$
a_\de(z)=\prod_{1\le i<j\le k}(z_i-z_j)=\det[z_i^{k-j}]_{1\le i,j,\le k}
$$
is the Vandermonde determinant, $a_\de(z^{-1})$ is the similar
determinant for the variables $z^{-1}=(z_1^{-1},{\ldots}
,z_k^{-1})$,  $l(\la)$ is the length of the diagram $\la$ (the number of
nonzero rows), $s_\la(z^{-1})$ is the Schur function
calculated at the variables $z^{-1}$, and
$s_\la$ is the Schur function as an element of $\La$ identified
with $\H[0]$. Thus we have
$$
e_{-i_1}{\ldots} e_{-i_k}\Om_{-2k}=(-1)^{k(k-1)/2}\cdot[1]\left(\prod_{j=1}^kz_j^{k-i_j}
a_\de(z)a_\de(z^{-1})
\sum_\la s_\la(z^{-1})s_\la\right).
$$

For convenience, set $\te_p:=e_{-(k-p)}$, $0\le p\le k$. Given
$0\le\al_1,{\ldots},
\al_k\le k$, we have
\beq\label{comp1}
\te_{\al_1}{\ldots}\te_{\al_k}\Om_{-2k}
=[1]\left(\prod_{j=1}^kz_j^{\al_j}a_\de(z)\sum_{l(\la)\le
k}a_{\la+\de}(z^{-1})s_\la\right),
\eeq
where $a_{\la+\de}(x)=\det[x_i^{\la_j+k-j}]_{1\le i,j\le k}=s_\la(x)a_\de(x)$. Consider
a Young diagram $\mu=(\mu_1,{\ldots}
,\mu_k)= (0^{r_0}1^{r_1}2^{r_2}{\ldots} )$. Let us
sum~\eqref{comp1}
over all different permutations $\al=(\al_1,{\ldots}
,\al_k)$ of the sequence
$(\mu_1,{\ldots} ,\mu_k)$. Note that
the operators $e_j$ commute with each other, so that the
left-hand side does not depend on the order of the factors. In the
right-hand side, $\sum_\al\prod z_j^{\al_j}=m_\mu(z)$, a monomial
symmetric function. Thus we have
\beq\label{comp2}
\frac{k!}{\prod_{j=0}^k r_j!}\te_{\mu_1}{\ldots} \te_{\mu_k}=
[1]\left(m_\mu(z)a_\de(z)\sum_{l(\la)\le
k}a_{\la+\de}(z^{-1})s_\la\right).
\eeq
Let $\nu$ be a Young diagram with
at most $k$ rows and at most $k$ columns, i.e., $\nu\subset(k^k)$. We have
\beq\label{Kostka}
s_\nu(z)=\sum_{\mu}
K_{\nu\mu}m_{\mu}(z),
\eeq
where $K_{\nu\mu}$ are Kostka numbers. It is well known that
$K_{\nu\mu}=0$ unless $\mu\le\nu$, where
$\le$ is the standard ordering on partitions:
$\mu\le\nu\iff \mu_1+{\ldots} +\mu_i\le\nu_1+{\ldots} +\nu_i$ for
every $i\ge1$. In particular, $\mu_1\le \nu_1\le k$.
Besides, since we consider
only $k$ nonzero variables $z_1,{\ldots} ,z_k$, it also follows that
$m_\mu(z)=0$ unless $l(\mu)\le k$. Thus the sum in~\eqref{Kostka} can
be taken only over diagrams $\mu\subset(k^k)$, for which equation~\eqref{comp2}
holds. Multiplying this equation by $K_{\nu\mu}$ and summing
over $\mu$ yields
$$
\sum_{\mu=(0^{r_0}1^{r_1}2^{r_2}{\ldots} )\subset(k^k)}
\frac{k!}{\prod_{j=0}^k r_j!}K_{\nu\mu}\te_{\mu_1}{\ldots} \te_{\mu_k}=
[1]\left(s_\nu(z)a_\de(z)\sum_{l(\la)\le
k}a_{\la+\de}(z^{-1})s_\la\right).
$$
By the orthogonality relations, the right-hand side is equal to
$k!s_\nu$,  and the desired formula~\eqref{gensegal} follows.
\end{proof}

\subsection{Examples}
\cellsize=1em

In this section, we present the results of computing $\Phi(\tau)$ for the serpentine tableaux with $r(\tau)\le4$ (note that although the conditions of Theorem~\ref{th2} do not determine the isomorphism uniquely, these relations hold for {\it any} isomorphism satisfying them) in terms of Newton's power sums $p_{k}$. We write down only the ``nontrivial'' part of a tableau, meaning that it should be continued up to an infinite tableau in the ``serpentine'' way.
We also omit the normalizing coefficients of $\Phi(\tau)$, which are their norms in the inner product~\eqref{inner}.

\begin{center}
\bigskip
\begin{tabular}{|c|c|c||c|c|c|}
\hline
$r(\tau)$ & $\tau$ & $\Phi(\tau)$&$r(\tau)$ & $\tau$ & $\Phi(\tau)$\\
&  & {\footnotesize up to a constant} & &  & {\footnotesize up to a constant}\\
\hline

0&$\tau_{0}$ & $1=s_{\emptyset}$ &
4 &$\tau_{2}=\lower0
\cellsize\vbox{\footnotesize
\cput(1,1){1}
\cput(1,2){2}
\cput(1,3){3}
\cput(1,4){4}
\cells{
 _ _ _ _
|_|_|_|_|}}$& $p_{1}^{4}+3p_{2}^{2}-4p_{1}p_{3}=s_{(2^{2})}$\\
\hline

1&$\tau_{1}=\lower0
\cellsize\vbox{\footnotesize
\cput(1,1){1}
\cput(1,2){2}
\cells{
 _ _
|_|_|}}$& $p_{1}=s_{(1)}$&
4&$\lower0
\cellsize\vbox{\footnotesize
\cput(1,1){1}
\cput(1,2){3}
\cput(1,3){4}
\cput(2,1){2}
\cput(2,2){5}
\cput(2,3){6}
\cells{
 _ _ _
|_|_|_|
|_|_|_|}}$& $p_{1}^{4}-3p_{2}^{2}+2p_{1}p_{3}$

\\
\hline

2&$\lower0
\cellsize\vbox{\footnotesize
\cput(1,1){1}
\cput(1,2){2}
\cput(1,3){4}
\cput(2,1){3}
\cells{
 _ _ _
|_|_|_|
|_|}}$& $p_{2}$&
4&$\lower0
\cellsize\vbox{\footnotesize
\cput(1,1){1}
\cput(1,2){2}
\cput(1,3){4}
\cput(1,4){6}
\cput(2,1){3}
\cput(2,2){5}
\cput(2,3){7}
\cput(2,4){8}
\cells{
 _ _ _ _
|_|_|_|_|
|_|_|_|_|}}$& $p_{1}^{4}+12p_{2}^{2}+32p_{1}p_{3}$
\\
\hline

2&$\lower0
\cellsize\vbox{\footnotesize
\cput(1,1){1}
\cput(1,2){2}
\cput(2,1){3}
\cput(2,2){4}
\cells{
 _ _
|_|_|
|_|_|}}$& $p_{1}^{2}$&
4&$\lower0
\cellsize\vbox{\footnotesize
\cput(1,1){1}
\cput(1,2){3}
\cput(1,3){4}
\cput(1,4){6}
\cput(2,1){2}
\cput(2,2){5}
\cells{
 _ _ _ _
|_|_|_|_|
|_|_|}}$& $p_{1}^{2}p_{2}-p_{4}$
\\
\hline

3&$\lower0
\cellsize\vbox{\footnotesize
\cput(1,1){1}
\cput(1,2){2}
\cput(1,3){3}
\cput(2,1){4}
\cells{
 _ _ _
|_|_|_|
|_|}}$& $p_{1}^{3}-p_{3}$&
4&$\lower0
\cellsize\vbox{\footnotesize
\cput(1,1){1}
\cput(1,2){2}
\cput(1,3){4}
\cput(1,4){6}
\cput(1,5){8}
\cput(2,1){3}
\cput(2,2){5}
\cput(2,3){7}
\cells{
 _ _ _ _ _
|_|_|_|_|_|
|_|_|_|}}$& $p_{1}^{2}p_{2}+4p_{4}$
\\
\hline

3&$\lower0
\cellsize\vbox{\footnotesize
\cput(1,1){1}
\cput(1,2){2}
\cput(1,3){4}
\cput(1,4){6}
\cput(2,1){3}
\cput(2,2){5}
\cells{
 _ _ _ _
|_|_|_|_|
|_|_|}}$& $p_{1}^{3}+8p_{3}$&&&\\
\hline

3&$\lower0
\cellsize\vbox{\footnotesize
\cput(1,1){1}
\cput(1,2){2}
\cput(1,3){4}
\cput(2,1){3}
\cput(2,2){5}
\cput(2,3){6}
\cells{
 _ _ _
|_|_|_|
|_|_|_|}}$& $p_{1}p_{2}$&&&\\
\hline

\end{tabular}
\end{center}

\end{document}